\theoremstyle{plain}
\newtheorem{thm}{Theorem}[section]
\newtheorem*{thm*}{Theorem}
\newtheorem*{cor*}{Corollary}
\newtheorem{lem}[thm]{Lemma}
\newcommand{\C}{\mathbb{C}}
\newcommand{\R}{\mathbb{R}}
\newcommand{\beqa}{\begin{eqnarray*}}
	\newcommand{\eeqa}{\end{eqnarray*}}
\def\R{\right)}
\def\<{\left<}
\def\>{\right>}
\def\mv1{M_v^1}
\def\R{\mathbb{R}}
\def\Ren{\mathbb{R}^d}
\def\Sn2{S_{2}(L^{2}(\Ren))}
\def\S1{S_{1}(L^{2}(\Ren))}
\def\sig00{\sigma_{0,0}}
\newcommand{\D}{\mathcal{D}}
\theoremstyle{definition}
\theoremstyle{remark}
\newtheorem{re}{Remark}
\newtheorem{example}{Example}
\title{Quantum Harmonic Analysis  and the Structure in Data: Augmentation}
\author{Monika D\"orfler \and Franz Luef \and Henry McNulty  }
\date{September 2025}
\begin{document}

\maketitle
\begin{abstract}
   In this short note, we study the impact of data augmentation on the smoothness of principal components of high-dimensional datasets. Using tools from quantum harmonic analysis, we show that eigenfunctions of operators corresponding to augmented data sets lie in the modulation space $M^1(\mathbb{R}^d)$, guaranteeing smoothness and continuity. Numerical examples on synthetic and audio data confirm the theoretical findings. While interesting in itself, the results   suggest that manifold learning and feature extraction algorithms can benefit from systematic and informed augmentation principles. 
\end{abstract}

\section{Introduction}

Expansions of functions in terms of basic building blocks, such as trigonometric functions or wavelets, are central in analysis and signal processing. In modern machine learning, features learned by deep networks often resemble such expansions, suggesting that data lie near low-dimensional structures within high-dimensional ambient spaces. 

Manifold learning formalizes this observation: data often concentrate on a low-dimensional manifold, allowing efficient representations. Data augmentation, a common practice in machine learning, can be understood as a way to smooth operators associated with empirical data, revealing intrinsic structure and stabilizing principal components.

This paper studies the smoothing effect of augmentation on data operators, showing how convolution with augmentation measures regularizes principal components and aids feature extraction. To do so, we leverage recent insights and methods from quantum harmonic analysis (QHA), cf.~\cite{Werner84,DoLuefNuSk24,dolusk24} as well as some important results on  embeddings of modulation spaces~\cite{Okoudjou2008,BeOk20} and the relation between modulation space membership of symbols and operator properties~\cite{GrHe03}.

\section{Preliminaries and Notation}

Let $\mathcal{H} = L^2(\mathbb{R}^d)$ be the Hilbert space of square-integrable functions. For $f,g \in \mathcal{H}$, the rank-one operator $f \otimes g$ is defined by
\[
(f \otimes g)(h) = \langle h, g\rangle f, \quad h \in \mathcal{H}.
\]

A bounded linear operator $T:\mathcal{H}\to\mathcal{H}$ is \emph{trace-class} if 
\[
\|T\|_{\mathcal{T}} = \mathrm{tr}(|T|) < \infty,
\]
and \emph{Hilbert-Schmidt} if
\[
\|T\|_{\mathrm{HS}}^2 = \mathrm{tr}(T^* T) < \infty.
\]

The \emph{trace} of a trace-class operator $T$ is defined by
\[
\mathrm{tr}(T) = \sum_{n} \langle T e_n, e_n \rangle,
\]
where $(e_n)$ is any orthonormal basis of $\mathcal{H}$; the sum converges absolutely and is independent of the choice of basis.

We also consider the Feichtinger algebra $M^1(\mathbb{R}^d)$, consisting of functions whose short-time Fourier transform is integrable. Operators with kernels in $M^1 (\mathbb{R}^{2d})$ have convenient convolution and Fourier transform properties used later.
\subsection{Time-Frequency Representations}
Let  $\pi(z)$ denote a  time-frequency shift defined by  $\pi(z)g(t) =e^{2\pi it\omega}g(t-x)$. 
For $\psi, \phi \in L^2(\mathbb{R}^d)$ we define:

\begin{itemize}
    \item \textbf{Short-time Fourier transform (STFT)} $V_\phi \psi$ of $\psi$ with window $\phi$:
    \[
    V_\phi \psi(z) = \langle \psi, \pi(z)\phi \rangle, \quad z \in \mathbb{R}^{2d},
    \]
    where $\pi(z)$ denotes the time-frequency shift.

    \item \textbf{Cross-Wigner distribution} of $\psi$ and $\phi$:
    \[
    W(\psi, \phi)(x, \omega) = \int_{\mathbb{R}^d} \psi\Big(x+\frac{t}{2}\Big) \overline{\phi\Big(x-\frac{t}{2}\Big)} e^{-2\pi i \omega \cdot t} \, dt.
    \]
\end{itemize}
\section{Operators, Translations and Fourier Transform}
Operators act on functions, respectively elements of  vector spaces. Thus they are potentially higher-dimensional than the objects they act on: they need to 
encode the latter's structure and the structure of the interaction between the isolated objects. We can understand functions as operators by looking at their action on other functions $f$ by means of projections: 
$$(g\otimes h)(f)=\langle f,h\rangle g .$$
If we think of the simple case of finite 
dimensional operators, which can be represented as finite matrices, we immediately see, that these simple projections form very special cases of operators. 
In many cases, though, the complicated action of an operator on functions
is again described with the help of more functions, e.g. the eigenfunctions in the case of a compact selfadjoint operator $T$ on $X$:
for some orthonormal set $\{g_n\}$ in $X$ we have 
\[ T=\sum_n s_n(T) g_n\otimes g_n. \]
Now one of the reasons of the importance
of Fourier transforms (and their relatives) in signal processing and other applications is the fact, that they diagonalize time-invariant systems. Systems, hence, which are invariant under translation. Can there be similar helpful connections for operators? 
What is the appropriate concept of translation, and of Fourier transform?

In order to understand the corresponding concepts for operators, first introduced by Werner~\cite{Werner84}, let us consider the properties of translation in the context of harmonic analysis. Indeed, Fourier transform turns translation into multiplication by a complex number with 
absolute value $1$. Also, the complex exponential functions $e_{\omega_0} (x)  = e^{2\pi i x \omega_0}$, for some fixed frequency $\omega_0$, are functions on $\R$. So, in the same spirit, we look for 
operators, whose inner products with 
operators turn translations of operators into multiplications with complex numbers. 

It turns out, that the following {\it translation of operators} provides the desired properties if paired with an equally appropriate Fourier transform for operators.
\subsection{Translation of Operators}

For $z \in \mathbb{R}^{2d}$ and $S$ a trace-class operator, define the operator translation
\[
\alpha_z(S) = \pi(z) S \pi(z)^*,
\]
where $\pi(z)$ is the  time-frequency shift defined by  $\pi(z)g(t) =e^{2\pi it\omega}g(t-x)$.

Since we are interested in defining a convolution for operators, which features properties similar to the classical convolution of functions in light of the Fourier transform, we must first find an equivalent for the integral of functions. 
Consider the defining properties of operator trace $\operatorname{tr}(T)$:   for two trace class operators $T, T'$, and r $c\in\C$ it holds that
\begin{itemize}
    \item $\operatorname{tr} (T+T') = \operatorname{tr}( T ) + \operatorname{tr}( T' )$
    \item $\operatorname{tr}( c T ) = c \operatorname{tr}( T )$
    \item $\operatorname{tr}( T T' ) = \operatorname{tr}( T T' )$. 
\end{itemize}
In  light of these connections, 
the following substitutions in the definition of convolutions, first proposed by Werner~\cite{Werner84}, seem intuitive: 
\begin{center}
\begin{tabular}{ |c|c| } 
 \hline
FUNCTIONS & OPERATORS \\
 \hline\hline
 real functions & selfadjoint operators\\
 \hline
 integral & trace  \\
 \hline
 translations & automorphism $\alpha_z$  \\ 
 \hline
 $L^p$-spaces & Schatten classes $\mathcal{S}^p$  \\ 
 \hline
continuous functions vanishing at infinity & compact operators  \\ 
 \hline
\end{tabular}
\end{center}

We will now introduce an analogue of the Fourier transform for a trace class operator $S$.
\subsection{Fourier-Wigner and Weyl Transform}
The Fourier-Wigner transform of a trace-class operator $S$ is
\[
\mathcal{F}_W(S)(z) = e^{-\pi i x \cdot \omega} \mathrm{tr}(\pi(-z) S), \quad z=(x,\omega) \in \mathbb{R}^{2d}.
\]
It satisfies
\[
\mathcal{F}_\sigma(S \star T) = \mathcal{F}_W(S) \mathcal{F}_W(T),
\]
linking operator convolution and Fourier analysis.\\

\begin{example}

Let $S = \varphi_2 \otimes \varphi_1$ with $\varphi_1, \varphi_2 \in L^2(\mathbb{R}^d)$.  
Then the Fourier-Wigner transform of $S$ is given by
\[
\mathcal{F}_W(\varphi_2 \otimes \varphi_1)(z) = A(\varphi_2, \varphi_1)(z),
\]
where $A(\varphi_2, \varphi_1)(z)$ is the \emph{cross-ambiguity function}.
In particular, for  the Gaussian 
\[
\varphi(t) = 2^{d/4} e^{-\pi t\cdot t}, \qquad t\in \mathbb{R}^d,
\] 
and the operator $S = \varphi \otimes \varphi$. Then its Fourier-Wigner transform satisfies
\[
\mathcal{F}_W\,S(z) = e^{\pi i x\cdot \omega} V_\varphi \varphi(z),
\]
where $V_\varphi \varphi$ is the short-time Fourier transform of $\varphi$ with itself.  
Explicitly, we find
\[
\mathcal{F}_W(\varphi \otimes \varphi)(z) = e^{2\pi i x\cdot \omega} \, e^{-\frac{\pi}{2} z\cdot z}, \qquad z\in \mathbb{R}^{2d}.
\]
\end{example}
\noindent Using the cross-Wigner distribution, we may introduce the Weyl calculus:\\
For $\sigma \in S_0(\mathbb{R}^{2d})$ and $\psi, \phi \in S(\mathbb{R}^d)$, the \emph{Weyl transform} $L_\sigma$ of $\sigma$ is the operator defined by
\[
\langle L_\sigma \psi, \phi \rangle_{S_0,S} = \langle \sigma, W(\psi, \phi) \rangle_{S_0,S}.
\]
Here, $\sigma$ is called the \emph{Weyl symbol} of the operator $L_\sigma$.\\
\begin{re}
Let  $\mathcal{F}_s$ denote the symplectic Fourier transform
\begin{align}\label{symfourierdef}
    \mathcal{F}_s f(z) := \int_{\mathbb{R}^{2d}} f(z')e^{-2\pi i\Theta (z,z')}\, dz'
\end{align}
where $\Theta$ is the symplectic form
\begin{align}\label{sympformdef}
    \Theta(z,z') := x'\omega-x\omega'.
\end{align}
Then the Fourier-Wigner transform and Weyl symbol of an operator $S$ are related by: 
$$\mathcal{F}_W(S)(z) = \mathcal{F}_s (S)  ( z).$$
\end{re}
 \subsection{Convolutions}  Convolution is an utterly important concept in signal processing. It is used in digital signal processing to study and design linear time-invariant systems such as digital filters. In  image processing, convolutional filtering can be used to implement algorithms such as edge detection, image sharpening, and image blurring, to name a few. 
\textbf{Convolution}  of two appropriately chosen functions $f$ and $g$ is defined by
\[ f\ast g(x)=\int_\R f(y)g(x-y)\,dy=\int_\R f(x-y)g(y)\,dy\]

Observe that  $f\ast g(x)=\int_\R f(y)T_y\check{g}(x)\,dy$, where $\check{g}(x)=g(-x)$ denotes the {\bf flip/parity } operator, also denoted by $Pg(x)=g(-x)$.  In other words, the convolution is an operator that integrates the translates of the function $f$ weighted by the function $g$: $f\ast g=\int_\R g(y)T_yf\,dy$

Given the beautiful properties and tremendous practical importance of convolution, the question of generalisation to operators arises. 
\subsubsection{Convolution of Operators}
For two operators $S,T$ on $L^2(\mathbb{R})$, their convolution $S\star T$ is the function on $\mathbb{R}^2$: 
    \[S\star T(z)=\mathrm{tr}(S\alpha_z(\check{T})),\]
				where $\check{T}=PTP$ is the flip of the operator $P$. 
On the other hand,  for a function $F$ on $\R^2$ and an operator $S$ we define their convolution $F\star S$ by 
     \[F\star S=\int_{\mathbb{R}^2} F(z) \alpha_z(S) \, dz.\]
The best-known existing example of such a convolution is given by classical localization operators. If we recall the properties of their spectral decomposition, we note a specific behaviour of the eigenvalues,~\cite{Daubechies1988,dolusk24}. 

 There is a very interesting and insightful connection between        
convolutions and Weyl quantization, namely: 
 $\alpha_z(L_a)=L_{T_za}$, i.e. translation of a Weyl operator $L_a$ with symbol $a$ is a Weyl operator with a translated symbol $T_za$.  

 Furthermore, as expected, the Fourier-Wigner transform shares several properties with the Fourier transform of functions:

\paragraph{Riemann-Lebesgue lemma.}  
If \(S \in \mathcal{T}^1\) (trace-class operators), then \(\mathcal{F}_W(S)\) is continuous and vanishes at infinity:
\[
\lim_{|z|\to\infty} |\mathcal{F}_W(S)(z)| = 0.
\]

\paragraph{Mapping properties.}  
Let \(f \in L^1(\mathbb{R}^{2d})\) and \(S,T \in \mathcal{T}^1\). Then
\begin{align}
\mathcal{F}_s(S \star T) &= \mathcal{F}_W(S)\, \mathcal{F}_W(T), \\
\mathcal{F}_W(f  \star S) &= \mathcal{F}_s(f)\, \mathcal{F}_W(S).
\end{align}
where \(\mathcal{F}_s\) denotes the symplectic Fourier transform. 
\section{Data Operators and Augmentation}
\subsection{Operators for  data sets and augmented data sets }

Given a dataset $\mathcal{D} = \{f_i\}_{i=1}^N \subset \mathcal{H}$, define the empirical data operator
\[
S_\mathcal{D} = \sum_{i=1}^N f_i \otimes f_i.
\]

Let $\Omega\subset \R^{2d}$, the augmented data set  is defined by 
 $$\D_\Omega =  \{(|\Omega|^{-1/2}\pi (\mu ) f_i;  f_i\in \D ,\mu\in\Omega\}$$ and the corresponding data operator is 
\[
S_{\mathcal{D}_\Omega} = \frac{1}{|\Omega|} \int_\Omega \alpha_z(S_\mathcal{D}) \, dz = \frac{1}{|\Omega|}\chi_\Omega \star S_\mathcal{D}.
\]
This is the mixed state localization operator corresponding to 
 $S$,  $\Omega$:
 \begin{equation}\label{eq:TF_augmentation} 
S_{\D_\Omega} = \frac{1}{|\Omega|}\chi_\Omega \star S =\frac{1}{|\Omega|}\sum_{\mu\in\Omega} \sum_i \pi (\mu ) f_i\otimes \pi (\mu ) f_i\
\end{equation}
\begin{re}
    The principal components of the augmented dataset correspond to the eigenfunctions of $S_{\mathcal{D}_\Omega}$. Convolution with $\chi_\Omega$ smoothes the operator, regularizing the spectrum and stabilizing feature extraction.
\end{re}
\begin{re}

Due to the structure of the convolutional layers in a Concolutional Neural Network (CNN), 
the output of the first convolutional layer with input $F^0 = | V_g f|^2$  and convolutional kernels $m_k$, can be written as 
 \begin{align*}
    F^1 (z,k) &= (F^0\ast m_k )(z) \\
    &=\langle \int_{z'} V_g f(z' )\cdot m_k (z-z' ) \pi (z' ) g\, dz' , f\rangle\\
    &=\langle(T_z \check{m}_k\star g\otimes g) f, f\rangle = \check{m}_k\ast [(f\otimes f)\star (\check{g}\otimes \check{g})](z)\\
    &=[\check{m}_k\star (f\otimes f)]\star (\check{g}\otimes \check{g})(z)
\end{align*}
 As a consequence, we can define the operator $H_{\check{m}_k,g}$ stemming from CNN's first layer: \begin{align*}
\langle H_{\check{m}_k,g} f,f\rangle &=\langle (T_z \check{m}_k\star g\otimes g) f, f\rangle = \check{m}_k\ast [(f\otimes f)\star (\check{g}\otimes \check{g})]\\
    &=[\check{m}_k\star (f\otimes f)]\star (\check{g}\otimes \check{g}).\end{align*}
We now use associativity of the convolutions and omit the $g$.  We thus consider the operator 
    $ H_{m_k,f} = \check{m}_k\star (f\otimes f)$ instead.
 Finally, setting $ S = \sum_i f_i\otimes f_i$, and $m_k = \chi_\Omega$,  we obtain exactly the mixed-state localization operator with localization domain $\Omega$.        
\end{re}


\subsubsection{Impact of Augmentation in Manifold Learning}
Data augmentation plays a central role in manifold learning by densifying the sampling of high-dimensional datasets and stabilizing local neighborhood structures, which are crucial for algorithms such as Isomap or  Locally Linear Embedding.  When the available data provide only sparse observations from the underlying manifold, neighborhood graphs may become disconnected or distorted, leading to poor embeddings. Augmentation methods such as time warping, jittering, or scaling in time series data, create additional samples that remain close to the intrinsic geometry, thereby improving both the robustness and generalization of the learned manifold representation. Compared to classical linear methods such as Principal Component Analysis (PCA), which assumes global linearity of the data space, augmentation enables nonlinear manifold learners to better capture curved or folded low-dimensional structures by reinforcing invariances along the manifold. In essence, while PCA benefits from augmentation through increased variance coverage, nonlinear methods exploit augmentation to preserve local geometry and reveal the manifold’s intrinsic topology.
In most physical settings, we expect a continuous and often smooth time series.  
However, for a general Hilbert--Schmidt operator, we cannot expect the eigenfunctions, or principal components, to be smooth or even continuous.  

Here we present an advantage of data augmentation: appropriately augmenting a data set ensures that principal components are smooth and continuous.  
We will see that the conditions for such an augmentation are in fact quite unrestrictive.  

We quantify smoothness of a function by membership in a specific modulation space $M^{p,q}(\mathbb{R}^d)$, which imposes a restriction on the (local) Fourier decay of a function.

\subsection{Transition from functions to operators}

There are several ways to associate to  $\mathcal{D}=\{f_1,...,f_n\}$ an  operator:
\begin{itemize}
    \item Data operator by empirical covariance: Given  a data set $\mathcal{D}=\{f_1,...,f_N\}$ , we  construct the data operator $S_{\mathcal{D}}=\sum_{i=1}^N f_i\otimes f_i$. Then
    \begin{enumerate}
        \item The Fourier-Wigner transform of $S_{\mathcal{D}}$ is given as  $\sum_{i}V_{f_i} f_i (z) $
        \item The Weyl symbol  of $S_{\mathcal{D}}$ is given as $\sum_{i}W_{f_i} f_i (z) $
       \item The total correlation  of $S_{\mathcal{D}}$ is given as $\widetilde{S_{\mathcal{D}}} (z) = \sum_{i,j}|V_{f_i} f_j (z)|^2 $
       
    \end{enumerate}

    \item Data operator by orthonormal basis: Given  a data set $\mathcal{D}=\{f_1,...,f_N\}$ and an orthonormal basis $\mathcal{B}=\{e_1,...,e_N\}$, we  construct the data operator $C_{\mathcal{D}}=\sum_{i=1}^N f_i\otimes e_i$. Then
    \begin{enumerate}
        \item The Fourier-Wigner transform of $C_{\mathcal{D}}$ is given as $\sum_{i}V_{e_i} f_i (z) $ \item The Weyl symbol  of $C_{\mathcal{D}}$ is given as $\sum_{i}W_{e_i} f_i (z) $
       \item The total correlation  of $C_{\mathcal{D}}$ is given as
$\widetilde{S_{\mathcal{D}}} (z) = \sum_{i,j} V_{e_i} f_j (z) \cdot $
       
    \end{enumerate}
    Note that $C_{\mathcal{D}}^\ast\cdot C_{\mathcal{D}} = S_{\mathcal{D}}$.
\end{itemize}

\section{Eigenfunctions of an Augmented Data Set are Smooth}

\subsection{Two preparatory lemmas}

For the proof of our main result, we will need the following convolution relations between modulation spaces.
\begin{lem}[Convolution relation, \cite{BeOk20}]\label{lem:convolution}
Given $1 \leq p,q,r,s,t \leq \infty$ satisfying
\[
\frac{1}{p} + \frac{1}{q} = 1 + \frac{1}{r},
\qquad
\frac{1}{t} + \frac{1}{t'} = \frac{1}{s},
\]
the inclusion
\[
M^{p,t}(\mathbb{R}^d) * M^{q,t'}(\mathbb{R}^d) \hookrightarrow M^{r,s}(\mathbb{R}^d)
\]
is continuous, with bound
\[
\|f * g\|_{M^{r,s}} \lesssim \|f\|_{M^{p,t}} \, \|g\|_{M^{q,t'}}.
\]
\end{lem}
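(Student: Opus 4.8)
The plan is to reduce the statement to the two scalar inequalities it encodes — Young's inequality for convolution, responsible for the relation $\tfrac1p+\tfrac1q = 1 + \tfrac1r$, and Hölder's inequality for pointwise products, responsible for $\tfrac1t + \tfrac1{t'} = \tfrac1s$ — by means of a single factorization identity for the short-time Fourier transform of a convolution. Recall that $\|f*g\|_{M^{r,s}} = \|V_\phi(f*g)\|_{L^{r,s}}$, where the mixed norm is taken as $L^r$ in the time variable $x$ first and then $L^s$ in the frequency variable $\omega$; since all admissible nonzero windows yield equivalent norms, I am free to choose a convenient one.

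First I would establish the identity
\[
V_{\phi_1 * \phi_2}(f * g)(x,\omega) = \big(V_{\phi_1}f(\cdot,\omega) * V_{\phi_2}g(\cdot,\omega)\big)(x),
\]
i.e. for each fixed frequency $\omega$ the STFT of the convolution equals the $x$-convolution of the two individual STFTs evaluated at the \emph{same} $\omega$. This is proved by passing to the Fourier side, where $\widehat{f*g} = \hat f\,\hat g$ and the chosen window factorizes as $\hat\phi = \hat\phi_1\hat\phi_2$; applying the fundamental identity $V_\phi f(x,\omega) = e^{-2\pi i x\cdot\omega}V_{\hat\phi}\hat f(\omega,-x)$ together with the convolution theorem, the unimodular phase factors cancel and the identity drops out. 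I would take $\phi_1=\phi_2$ a Gaussian, so that $\phi := \phi_1 * \phi_2$ is again a nonzero Schwartz Gaussian and hence a legitimate window.

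With the identity in hand, the estimate is an iterated application of elementary inequalities. Fixing $\omega$, Young's inequality in $x$ with $\tfrac1p + \tfrac1q = 1 + \tfrac1r$ gives
\[
\|V_\phi(f*g)(\cdot,\omega)\|_{L^r_x} \le \|V_{\phi_1}f(\cdot,\omega)\|_{L^p_x}\,\|V_{\phi_2}g(\cdot,\omega)\|_{L^q_x}.
\]
Viewing the right-hand side as a product of two functions of $\omega$ and taking the $L^s$-norm in $\omega$, Hölder's inequality with $\tfrac1t + \tfrac1{t'} = \tfrac1s$ yields
\[
\|f*g\|_{M^{r,s}} \le \big\|\,\|V_{\phi_1}f(\cdot,\omega)\|_{L^p_x}\big\|_{L^t_\omega}\,\big\|\,\|V_{\phi_2}g(\cdot,\omega)\|_{L^q_x}\big\|_{L^{t'}_\omega} = \|f\|_{M^{p,t}}\,\|g\|_{M^{q,t'}},
\]
which is exactly the claimed bound, the implicit constant arising only from the change of window.

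The main obstacle is not the chain of inequalities but the rigorous justification of the identity and of the manipulations at the endpoints. For $f,g$ Schwartz every step is legitimate, but several exponents may equal $\infty$, where the Schwartz class fails to be dense in $M^{p,t}$; there one must argue instead by weak-$\ast$ density, exploiting that $M^{p,t}$ is a dual space for the relevant indices, or verify the identity and the accompanying Fubini and convolution-theorem steps directly at the level of tempered distributions whose STFT lies in the appropriate mixed-norm space. Confirming that $f*g$ is well defined as a tempered distribution under the stated index constraints, and that all integrals converge so that the two-step norm estimate is valid, is the delicate point that this density/duality argument is designed to settle.
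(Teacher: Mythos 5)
The paper does not prove this lemma at all: it is quoted verbatim from B\'enyi--Okoudjou \cite{BeOk20} and used as a black box, so there is no in-paper argument to compare against. Your proposal is the standard proof of this convolution relation (going back to Cordero--Gr\"ochenig and Toft), and its architecture is sound: choose a factorized window $\phi=\phi_1*\phi_2$, convert the STFT of $f*g$ at fixed $\omega$ into an $x$-convolution of the two STFTs, then apply Young in $x$ and generalized H\"older in $\omega$, with the constant absorbed by the change of window. One inaccuracy worth fixing: the identity does not hold exactly as you wrote it, because writing $V_\phi f(x,\omega)=e^{-2\pi i x\cdot\omega}\bigl(f*M_\omega\phi^*\bigr)(x)$ with $\phi^*(t)=\overline{\phi(-t)}$ shows that
\[
V_{\phi_1*\phi_2}(f*g)(x,\omega)=e^{-2\pi i x\cdot\omega}\Bigl[\bigl(e^{2\pi i\,\cdot\,\omega}V_{\phi_1}f(\cdot,\omega)\bigr)*\bigl(e^{2\pi i\,\cdot\,\omega}V_{\phi_2}g(\cdot,\omega)\bigr)\Bigr](x),
\]
so the phases survive \emph{inside} the convolution rather than cancelling. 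This is harmless -- the pointwise bound $|V_{\phi_1*\phi_2}(f*g)(\cdot,\omega)|\le |V_{\phi_1}f(\cdot,\omega)|*|V_{\phi_2}g(\cdot,\omega)|$ is all that Young's inequality needs -- but you should state the estimate in that modulus form rather than claim an exact phase-free identity. Your closing remarks about endpoint exponents and weak-$*$ density are the right way to handle the cases where Schwartz functions are not dense; with the identity corrected as above, the argument is complete and matches what \cite{BeOk20} actually does.
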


We will also use the following fact for modulation spaces:

\begin{lem}[\cite{Okoudjou2008}]\label{lem:compact}
Let $1 \leq p,q \leq \infty$. Then
\[
M^{p,q}_{\mathrm{comp}}(\mathbb{R}^d) 
= \big( \mathcal{F}L^q \big)_{\mathrm{comp}}(\mathbb{R}^d),
\]
where $M^{p,q}_{\mathrm{comp}}(\mathbb{R}^d)$ (resp.\ $(\mathcal{F}L^q)_{\mathrm{comp}}(\mathbb{R}^d)$) is the subspace consisting of functions with compact support.
\end{lem}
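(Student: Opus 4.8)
The plan is to reduce the coincidence of the two spaces to the \emph{local} structure of modulation spaces: for a compactly supported $f$ only finitely many pieces of a partition of unity are active, so the global summability index $p$ becomes irrelevant and only the Fourier--Lebesgue index $q$ survives. First I would fix a window $g \in C_c^\infty(\mathbb{R}^d)$ generating a partition of unity, $\sum_{k\in\mathbb{Z}^d} T_k g \equiv 1$, and invoke the discrete Wiener-amalgam description of the modulation norm,
$$
\|f\|_{M^{p,q}} \asymp \Big(\sum_{k\in\mathbb{Z}^d}\|(T_kg)\,f\|_{\mathcal{F}L^q}^p\Big)^{1/p},
$$
with the usual modification for $p=\infty$. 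The inner ingredient is the Fourier-algebra mapping property: since $T_kg \in \mathcal{F}L^1$ with $\|T_kg\|_{\mathcal{F}L^1}=\|g\|_{\mathcal{F}L^1}$ independent of $k$ (translation only multiplies $\hat g$ by a unimodular factor), Young's inequality applied to $\widehat{(T_kg)f}=\widehat{T_kg}*\hat f$ yields $\|(T_kg)f\|_{\mathcal{F}L^q}\le \|g\|_{\mathcal{F}L^1}\|f\|_{\mathcal{F}L^q}$ uniformly in $k$.

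Next I would use compact support. If $\mathrm{supp}\,f\subset K$, only the finitely many indices in $F_K=\{k:\mathrm{supp}(T_kg)\cap K\ne\emptyset\}$ contribute; write $N=\#F_K$. The upper bound follows at once from the uniform Fourier-algebra estimate, giving $\|f\|_{M^{p,q}}\lesssim N^{1/p}\|g\|_{\mathcal{F}L^1}\|f\|_{\mathcal{F}L^q}$. For the reverse bound I would write $f=\sum_{k\in F_K}(T_kg)f$ and apply the triangle inequality in $\mathcal{F}L^q$ together with H\"older over the finite set $F_K$, obtaining $\|f\|_{\mathcal{F}L^q}\le N^{1/p'}\big(\sum_{k\in F_K}\|(T_kg)f\|_{\mathcal{F}L^q}^p\big)^{1/p}\lesssim \|f\|_{M^{p,q}}$.

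Combining the two estimates shows that the two norms are equivalent on functions supported in $K$, which yields both inclusions and hence the claimed set equality after taking the union over all compact $K$. Note that this route treats all $p,q\in[1,\infty]$ on an equal footing, precisely because the global index $p$ only enters through the finite, $f$-independent factors $N^{1/p}$ and $N^{1/p'}$.

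The step I expect to be the genuine obstacle is justifying the discrete amalgam characterization with the correct placement of the indices (local $\mathcal{F}L^q$ smoothness against global $\ell^p$ summability). If one instead argues directly from the STFT, the difficulty resurfaces as a mixed-norm ordering problem: the modulation norm integrates $L^p$ in the time variable \emph{before} $L^q$ in frequency, and for $p>q$ one cannot swap these orders by Minkowski's inequality. On that route one is forced to control a maximal expression $\big\|\sup_{x}|V_gf(x,\cdot)|\big\|_{L^q}$ by $\|f\|_{\mathcal{F}L^q}$, which is exactly the technical point the amalgam characterization packages away; this is why I would route the argument through the partition of unity rather than through the raw STFT integral.
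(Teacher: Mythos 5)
The paper offers no proof of this lemma — it is quoted from Okoudjou's article — so I am comparing your argument against the standard proof in the cited literature. Your finite-overlap strategy is the right instinct, but it rests on a norm equivalence that is false for $p\neq q$. The quantity $\bigl(\sum_{k}\|(T_kg)f\|_{\mathcal{F}L^q}^{p}\bigr)^{1/p}$ is the norm of the Wiener amalgam space $W(\mathcal{F}L^q,\ell^p)$, and this space does not coincide with $M^{p,q}$ unless $p=q$: the modulation norm takes the $L^p$ norm in $x$ \emph{inside} the $L^q$ norm in $\omega$, whereas your amalgam expression puts the $\ell^p$ sum outside, and Minkowski only gives a one-sided comparison whose direction depends on the order of $p$ and $q$. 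Concretely, for $(p,q)=(\infty,1)$ your right-hand side is $\sup_k\|(T_kg)f\|_{\mathcal{F}L^1}$ ("uniformly locally $\mathcal{F}L^1$"), which is finite for $f=\sum_k M_{\omega_k}T_k\varphi$ with $\varphi$ a bump and widely spread frequencies $\omega_k$, while $\|f\|_{M^{\infty,1}}=\int\sup_x|V_gf(x,\omega)|\,d\omega$ diverges for such $f$; the Sjöstrand class $M^{\infty,1}$ is strictly smaller than $W(\mathcal{F}L^1,L^\infty)$. (The true amalgam picture is $\mathcal{F}(M^{p,q})=W(\mathcal{F}L^p,L^q)$, so $W(\mathcal{F}L^q,L^p)$ is the Fourier image of $M^{q,p}$, not $M^{p,q}$.) Your computation therefore proves the easy identity $W(\mathcal{F}L^q,\ell^p)_{\mathrm{comp}}=(\mathcal{F}L^q)_{\mathrm{comp}}$, but the whole content of the lemma is identifying the \emph{modulation} space with that amalgam space on compactly supported functions — precisely what the unproven equivalence assumes. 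You correctly flag the mixed-norm ordering as the genuine obstacle, but routing through the partition of unity does not package it away; it relocates it into a false global statement.

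The repair is not far off. The correct discrete characterization keeps the $\ell^p$ norm over $k$ \emph{inside} the $L^q_\omega$ norm, $\|f\|_{M^{p,q}}\asymp\bigl\|\,\|\bigl(\mathcal{F}((T_kg)f)(\omega)\bigr)_k\|_{\ell^p}\,\bigr\|_{L^q_\omega}$; for $f$ supported in a compact set only $N$ indices contribute, so the finite-dimensional comparison between $\ell^1$ and $\ell^p$ can be applied \emph{pointwise in $\omega$} before integrating, after which your two estimates go through essentially verbatim. Alternatively, and closer to the cited source, prove the two inclusions directly from the STFT. For $(\mathcal{F}L^q)_{\mathrm{comp}}\hookrightarrow M^{p,q}$, note that with a compactly supported window $V_gf(\cdot,\omega)$ vanishes for $x$ outside a fixed compact set, and that $|V_gf(x,\omega)|\le\bigl(|\hat f|*|\hat g(-\cdot)|\bigr)(\omega)$ uniformly in $x$; this bounds the maximal expression $\bigl\|\sup_x|V_gf(x,\cdot)|\bigr\|_{L^q}$ by $\|\hat g\|_{L^1}\|\hat f\|_{L^q}$, which is exactly the estimate you declared inaccessible. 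For the converse, choose a window equal to $1$ on a neighbourhood of the support, so that $\hat f(\omega)=V_gf(0,\omega)$, and control this point evaluation by the $L^p_x$ norm using the reproducing inequality $|V_gf|\lesssim|V_\gamma f|*|V_g\gamma|$ together with H\"older over the compact $x$-support.
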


As a consequence of the last lemma, for any compact domain $\Omega$ we have $\chi_\Omega \in M^{1,2}(\mathbb{R}^{2d})$, since $\chi_\Omega \in L^2(\mathbb{R}^{2d})$.  

Invoking Lemma~\ref{lem:convolution} with $p = s = 1$ and  $t = t'= q = r  =2$, we thus have that \begin{align*}
        \chi_{\Omega} * F\in M^{2,1}(\mathbb{R}^{2d}).
    \end{align*}
    for any $F\in L^2(\mathbb{R}^{2d})$. 

   We move on to state and prove the main theorem.

  \subsection{Main result: The smoothing effect of augmentation}
In our central result, we  quantify smoothness of a function by membership in  $M^1(\mathbb{R}^d)$, which imposes a restriction on the (local) Fourier decay of a function. Note that more refined decay properties will be guaranteed by assuming membership in  specific, possibly weighted,  modulation spaces $M_m^{p,q}(\mathbb{R}^d)$.   

    \begin{thm} Given $S\in\mathcal{HS}$ and some compact domain $\Omega \subset \mathbb{R}^{2d}$, the eigenfunctions of $\chi_{\Omega} \star S$ are in $M^1(\mathbb{R}^d)$.
   
\end{thm}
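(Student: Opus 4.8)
The plan is to pass to the Weyl symbol of the convolved operator, place that symbol in a concrete modulation space, and then read off regularity of the range of the operator. First I would identify the Weyl symbol of $T := \chi_\Omega \star S$. Writing $a$ for the Weyl symbol of $S$ (so $S = L_a$) and using the intertwining relation $\alpha_z(L_a) = L_{T_z a}$ recorded above, the function--operator convolution $\chi_\Omega \star S = \int_{\mathbb{R}^{2d}} \chi_\Omega(z)\,\alpha_z(S)\,dz$ becomes, by linearity and continuity of the Weyl correspondence, $L_{\int \chi_\Omega(z) T_z a\,dz} = L_{\chi_\Omega * a}$. Equivalently, this can be checked on the Fourier--Wigner side, where the mapping property $\mathcal{F}_W(\chi_\Omega \star S) = \mathcal{F}_s(\chi_\Omega)\,\mathcal{F}_W(S)$ together with $\mathcal{F}_W(S) = \mathcal{F}_s(a)$ gives $\mathcal{F}_W(T) = \mathcal{F}_s(\chi_\Omega * a)$. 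Either way, the Weyl symbol of $T$ is $\chi_\Omega * a$.

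Next I would locate this symbol in a modulation space. Since $S$ is Hilbert--Schmidt, its Weyl symbol satisfies $a \in L^2(\mathbb{R}^{2d})$, the Weyl transform being (up to normalization) unitary from $L^2(\mathbb{R}^{2d})$ onto the Hilbert--Schmidt operators. Applying the consequence of Lemmas~\ref{lem:compact} and~\ref{lem:convolution} displayed just above the theorem---namely $\chi_\Omega * F \in M^{2,1}(\mathbb{R}^{2d})$ for every $F \in L^2(\mathbb{R}^{2d})$, using $\chi_\Omega \in M^{1,2}$ for compact $\Omega$---I obtain that the symbol $\chi_\Omega * a$ of $T$ lies in $M^{2,1}(\mathbb{R}^{2d})$. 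In particular $M^{2,1} \subset M^{2,2} = L^2$, so $T$ is again Hilbert--Schmidt, hence compact and equipped with a genuine system of eigenfunctions.

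The final step is to convert symbol regularity into regularity of the range. Here I would invoke the correspondence between modulation-space membership of symbols and operator mapping properties from~\cite{GrHe03}: a Weyl operator whose symbol lies in $M^{2,1}(\mathbb{R}^{2d})$ maps $L^2(\mathbb{R}^d)$ boundedly into the Feichtinger algebra $M^1(\mathbb{R}^d)$. Granting this, if $\varphi \in L^2(\mathbb{R}^d)$ is an eigenfunction with $T\varphi = \lambda \varphi$ and $\lambda \neq 0$, then $\varphi = \lambda^{-1} T\varphi \in M^1(\mathbb{R}^d)$, which is the assertion (the eigenvalue $\lambda = 0$ corresponds to the kernel of $T$ and carries no principal component).

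The main obstacle is precisely this last smoothing statement, and getting its mixed-norm bookkeeping right. I would prove it by a time--frequency analysis of the operator: fixing a Gaussian window $g$ and the associated window $\Phi = W(g,g)$ on $\mathbb{R}^{2d}$, one expresses the Gabor matrix coefficients $\langle L_\sigma \pi(\mu)g, \pi(\nu)g\rangle$ through the short-time Fourier transform $V_\Phi \sigma$, and then checks that $\sigma \in M^{2,1}$ forces this matrix to send the $\ell^2$ Gabor coefficients of an $L^2$ function to the $\ell^1$ Gabor coefficients characterizing $M^1$. Lining up the two indices of the mixed norm is the delicate point, and it is exactly where the two hypotheses feed in: the Hilbert--Schmidt property of $S$ supplies the $L^2$ symbol, while compactness of $\Omega$ supplies the $M^{1,2}$ factor $\chi_\Omega$, and it is their convolution---rather than $S$ alone---that produces the gain into $M^1$.
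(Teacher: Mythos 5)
Your proposal follows essentially the same route as the paper's proof: identify the Weyl symbol of $\chi_\Omega\star S$ as $\chi_\Omega*\sigma_S$, place it in $M^{2,1}(\mathbb{R}^{2d})$ via Lemmas~\ref{lem:compact} and~\ref{lem:convolution}, invoke the Gr\"ochenig--Heil symbol-class result to get boundedness from $M^2(\mathbb{R}^d)=L^2(\mathbb{R}^d)$ into $M^1(\mathbb{R}^d)$, and bootstrap through the eigenfunction equation. If anything, your write-up is slightly more careful than the paper's, since you make the step $\varphi=\lambda^{-1}T\varphi$ explicit and note that $\lambda=0$ must be excluded.
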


\begin{proof}
We consider the Weyl symbol of the mixed-state localisation operator:
\[
\sigma_{\chi_\Omega * S} = \chi_\Omega * \sigma_S.
\]
By our assumptions on $S$ and $\Omega$, along with Lemma \ref{lem:compact}, we have
\[
\chi_\Omega * \sigma_S 
\in M^{1,2}(\mathbb{R}^{2d}) * M^{2,2}(\mathbb{R}^{2d}) 
\hookrightarrow M^{2,1}(\mathbb{R}^{2d}).
\]

In order to relate this observation to the properties of the augmented operator's eigenfunctions, we  use a particular instance of the  result in~\cite[Theorem 7.1]{GrHe03} on mapping properties between modulation spaces. While Gr\"ochenig and Heil gave general conditions on the Weyl symbol $\sigma_S$ of an operator $S$ in order for $S$ to be bounded from $M^{\mu_1, \mu_2}$ to  $M^{\nu_1, \nu_2}$, we only need the case $p=2$, $q=1$ and deduce that  $\chi_\Omega *S$ is bounded from $M^2(\mathbb{R}^d)$ to $M^1(\mathbb{R}^d)$, that is, $\nu_1 =  \nu_2 = 1 $ and  $\mu_1 =  \mu_2 = 2 $.

In order to conclude, we note that any function satisfying the eigenfunction equation
\[
\chi_\Omega * \sigma_S f = \lambda f
\]
for some constant $\lambda\in\C$ must therefore be in $M^1(\mathbb{R}^d)$, as claimed.
\end{proof}

\subsubsection{Numerical evidence}
We give two numerical examples in order to visually substantiate our results. 
\begin{example}[Gaussians]
The first experiment shows the smoothing effect on a synthetic example. We first consider the one-dimensional data set consisting of scaled versions of one Gaussian function $g$, yielding a rank-one data operator $S_0  = g\otimes g$. Note that its augmentation on $\Omega$ corresponds just to the classical localization operator $\chi_\Omega\star g\otimes g$, and thus the first eigenfunction of both $S_0$ and $\chi_\Omega\star g\otimes g$ is just a (modulated and shifted, depending on $\Omega$) Gaussian window, see the left plots in Figure~\ref{Fig1}. The (trivial) data manifold is then disturbed by small random time-frequency-shifts $\pi (z)$ with $|z|\leq 5$ to yield the data operator $S_5 =\sum_{z\in I} \pi(z) g\otimes\pi (z) g$.  The first eigenfunction of this operator is noisy, as seen in the upper right plot. The lower right plot shows the effect of augmentation: the eigenfunction is smoothed and seems to be close to the original data manifold. 
\begin{figure}[H]
        \centering
        \includegraphics[width=0.98\textwidth]{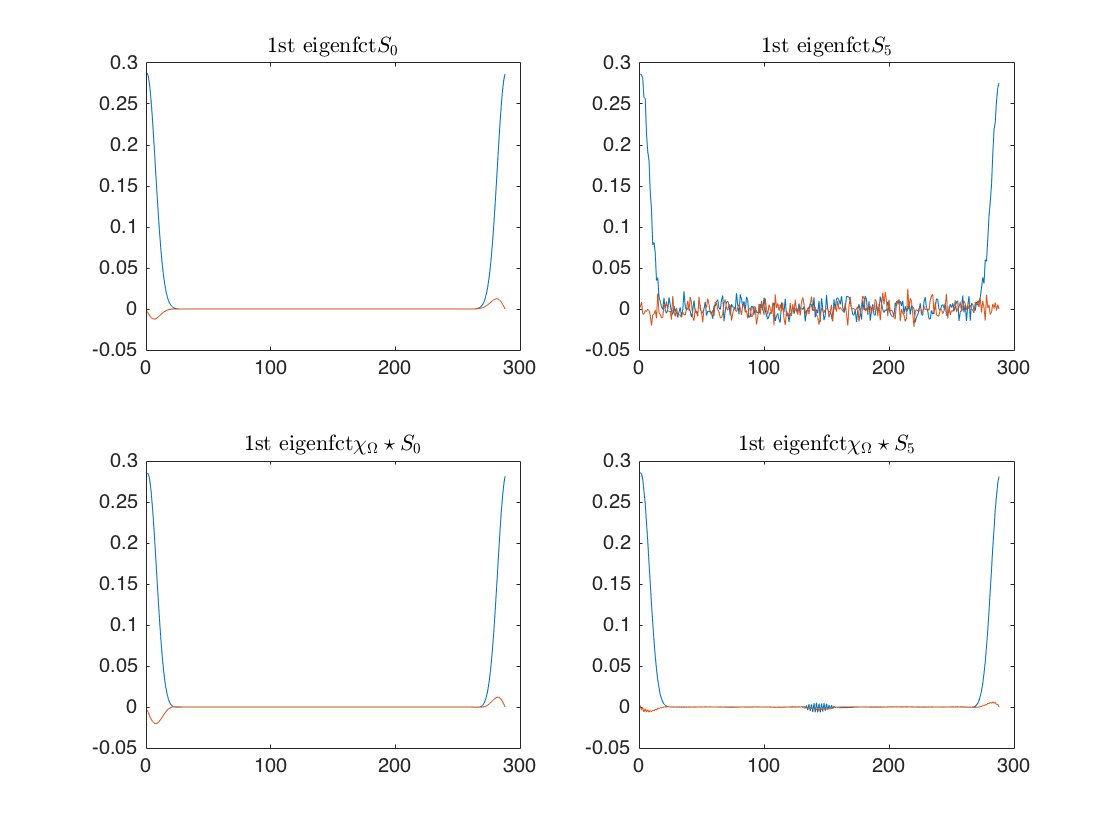}\vspace{-.3cm}
        \caption{First PC  of  data set and  TF-augmented data set}\label{Fig1}
    \end{figure}
\end{example}

\begin{example}[A music piece]
    As a second example, we chose a collection short audio-segments stemming from a music piece of approximately 7 minutes length. The music piece consists of different parts during which various instruments are audible or not. A possible future task will be the detection of the instruments playing at a particular moment in the piece. However, there is obviously a high random variability in the instances present in each time-snippet and the set of principal components is very noisy,  cf. the spectrograms shown in the two upper plots in Figure~\ref{Fig2}. The data set was then augmented along both time and frequency, that is, by time-frequency shifts in  a compact set $\Omega$ as in Example~1 and the resulting principal components show a significantly faster decaying spectrogram, which is in line with the main result of this contribution. 
\begin{figure}[H]
\centering
\includegraphics[width=0.95\textwidth]{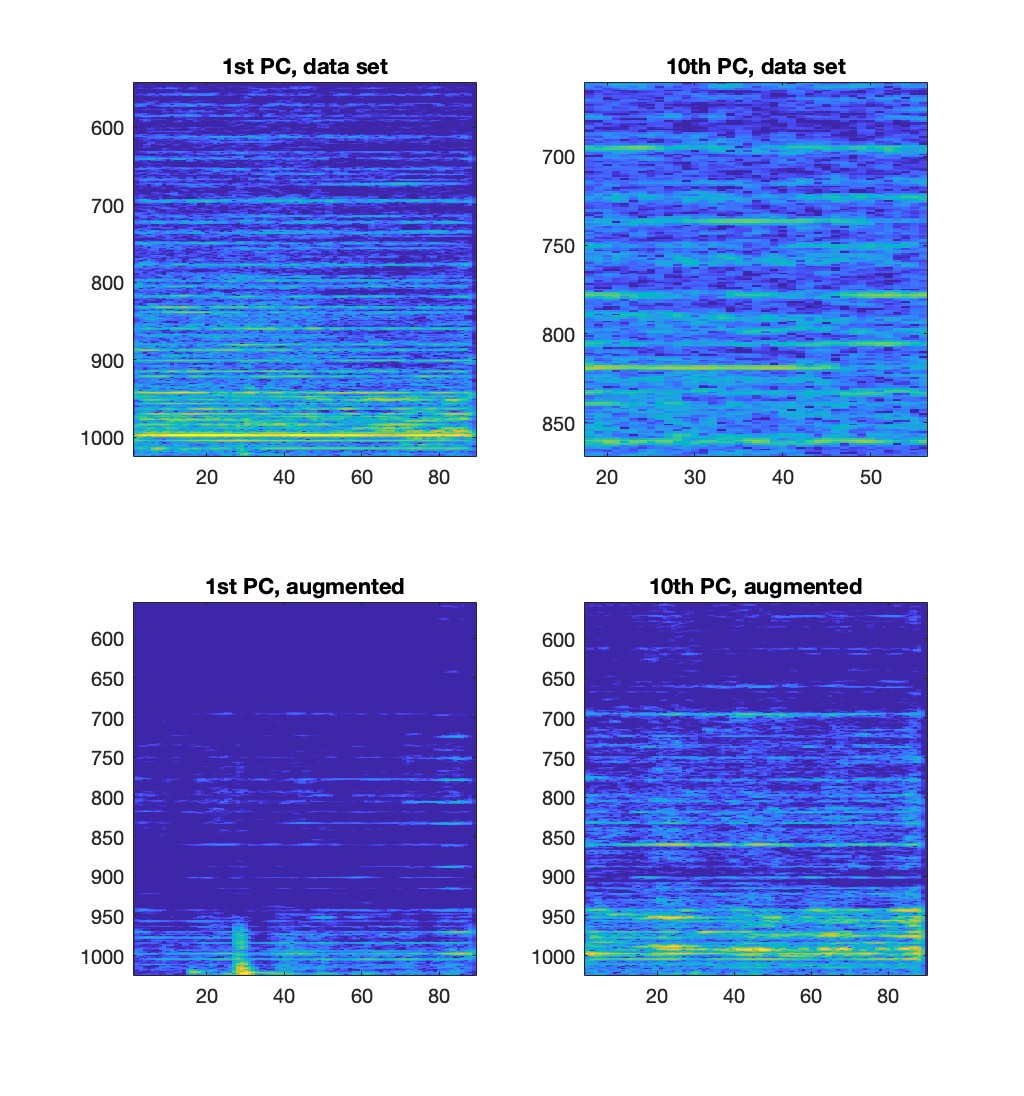}
  \caption{First and tenth principal   of  music data set and  TF-augmented music data set. The spectrograms are depicted and show the significantly smoother nature of the eigenfunctions after augmentation. }\label{Fig2}
\end{figure}
\end{example}
\section{Perspectives}
This note presented a fundamental result on the smoothing action of augmentation on data sets. 
In the next steps, these insights will be extended to more detailed decay conditions, in particular to mixed and weighted modulation spaces. On the other hand, the promising experimental results will be extended and leveraged in improvement for existing manifold learning strategies as outlined in the introduction. 


\begin{thebibliography}{1}

\bibitem{BeOk20}
{\'A}.~B{\'e}nyi and K.~A. Okoudjou.
\newblock {\em Modulation Spaces: With Applications to Pseudo-differential Operators and Nonlinear Schr\"odinger Equations}.
\newblock Applied and Numerical Harmonic Analysis. Birkh\"auser/Springer, New York, NY, 2020.

\bibitem{Daubechies1988}
I.~Daubechies.
\newblock Time-frequency localization operators: A geometric phase space approach.
\newblock {\em IEEE Transactions on Information Theory}, 34(4):605--612, 1988.

\bibitem{DoLuefNuSk24}
M.~D\"orfler, F.~Luef, H.~McNulty, and E.~Skrettingland.
\newblock Time -- frequency analysis and coorbit spaces of operators.
\newblock {\em Journal of Mathematical Analysis and Applications}, 534(2):128058, 2024.

\bibitem{dolusk24}
M.~D\"orfler, F.~Luef, and E.~Skrettingland.
\newblock Local structure and effective dimensionality of time series data sets.
\newblock {\em Applied and Computational Harmonic Analysis}, 73:101692, 2024.

\bibitem{GrHe03}
K.~Gr{\"o}chenig and C.~Heil.
\newblock Modulation spaces as symbol classes for pseudodifferential operators.
\newblock In {\em Wavelets and Their Applications}, pages 151--169. Allied Publishers, New Delhi, India, 2003.

\bibitem{Okoudjou2008}
K.~A. Okoudjou.
\newblock Beurling--helson type theorem for modulation spaces.
\newblock {\em Proceedings of the American Mathematical Society}, 136(4):1381--1389, 2008.

\bibitem{Werner84}
R.~Werner.
\newblock Quantum harmonic analysis on phase space.
\newblock {\em J. Math. Phys.}, 25(5): 1404--1411, 1984.

\end{thebibliography}


\end{document}